\documentclass[11pt,reqno]{amsart}
\usepackage[letterpaper,margin=1.05in,headheight=14pt]{geometry}
\usepackage[T1]{fontenc}
\usepackage{lmodern}
\usepackage{amsmath,amssymb,amsthm,mathrsfs}
\usepackage{enumitem}
\usepackage{microtype}
\usepackage{xcolor}
\usepackage{tikz}
\definecolor{linknavy}{HTML}{17365D}
\definecolor{citegreen}{HTML}{1E5B45}
\usepackage[
colorlinks=true,
linkcolor=linknavy,
citecolor=citegreen,
urlcolor=linknavy,
pdfauthor={Levi Segal and Jacques Verstraete},
pdftitle={Logarithmic circumference in tough graphs},
pdfsubject={A logarithmic cycle bound and its sharpness for toughness at most one},
pdfkeywords={Graph toughness, circumference, treedepth, long cycles}
]{hyperref}
\usepackage[capitalise,noabbrev]{cleveref}
\numberwithin{equation}{section}
\setlist{leftmargin=2.1em,itemsep=0.25em,topsep=0.45em}
\allowdisplaybreaks[2]
\newtheorem{theorem}{Theorem}[section]

\newtheorem{corollary}[theorem]{Corollary}
\newtheorem{lemma}[theorem]{Lemma}
\theoremstyle{definition}

\newtheorem{conjecture}[theorem]{Conjecture}
\newtheorem{problem}[theorem]{Problem}
\theoremstyle{remark}

\crefname{theorem}{Theorem}{Theorems}
\Crefname{theorem}{Theorem}{Theorems}
\crefname{proposition}{Proposition}{Propositions}
\Crefname{proposition}{Proposition}{Propositions}
\crefname{corollary}{Corollary}{Corollaries}
\Crefname{corollary}{Corollary}{Corollaries}
\crefname{lemma}{Lemma}{Lemmas}
\Crefname{lemma}{Lemma}{Lemmas}
\crefname{problem}{Problem}{Problems}
\Crefname{problem}{Problem}{Problems}
\crefname{definition}{Definition}{Definitions}
\Crefname{definition}{Definition}{Definitions}
\crefname{conjecture}{Conjecture}{Conjectures}
\Crefname{conjecture}{Conjecture}{Conjectures}
\crefname{section}{Section}{Sections}
\Crefname{section}{Section}{Sections}
\crefname{equation}{Equation}{Equations}
\Crefname{equation}{Equation}{Equations}

\DeclareMathOperator{\circum}{circ}
\DeclareMathOperator{\td}{td}

\newcommand{\BJM}{Bria\'nski, Joret, Majewski, Micek, Seweryn, and Sharma}
\makeatletter
\def\thm@space@setup{%
  \thm@preskip=12pt plus 2pt minus 2pt
  \thm@postskip=12pt plus 2pt minus 2pt
}
\makeatother
\title[Logarithmic circumference in tough graphs]
{Logarithmic circumference in tough graphs}
\author{Levi Segal}
\address[Levi Segal]{Stanford Online High School}
\email{levisegal0@gmail.com}
\date{}
\author{Jacques Verstraete}
\address[Jacques Verstraete]{Department of Mathematics, University of California, San Diego}
\email{jverstraete@ucsd.edu}
\date{}

\subjclass[2020]{Primary 05C38; Secondary 05C40}
\keywords{Graph toughness, circumference, treedepth, long cycles}

\begin{document}
\begin{abstract}
For every real $t>0$, we prove that every $2$-connected $t$-tough graph contains a cycle of length at least $\ell$ whenever $n \leq \ell(1 + 1/t)^{\lfloor \ell/2\rfloor - 1}$.
This establishes the logarithmic bound conjectured by Broersma, van den Heuvel, Jung, and Veldman. 
\end{abstract}
\maketitle

\begin{samepage}
\section*{AI declaration statement}
We acknowledge the assistance of OpenAI's GPT-5.6 in drafting and revising the manuscript, auditing
proofs, suggesting proof strategies, and recommending relevant literature. In particular,
GPT-5.6 suggested considering trees and elimination trees in the study
of the logarithmic circumference bound, helping to shape the approach
developed here. AI did not give a full proof for any theorem or proposition.
The authors have checked the proofs, modified the presentation, and added further explanations where appropriate. They take full responsibility for all the content in the paper.
\end{samepage}

\section{Introduction}

We consider finite simple undirected graphs. For a graph $G$, let $\omega(G)$ denote the
number of components of $G$, and define the \emph{circumference} of $G$ to be the number of
vertices of a longest cycle in $G$, denoted $\circum(G)$. A graph $G$ is \emph{$t$-tough} if
\begin{equation}\label{eq:toughness}
 |S|\ge t\,\omega(G-S)
 \qquad\text{whenever }\omega(G-S)\ge2.
\end{equation}
The \emph{toughness} $\tau(G)$ is the largest such $t$, with complete graphs assigned infinite toughness. A graph is $2$-connected if it has at least
three vertices, is connected, and remains connected after deleting any
one vertex. Throughout, $\log$ denotes the natural logarithm.

\bigskip

The central questions ask for the relationship between the toughness and the circumference of graphs -- see the survey of Bauer, Broersma, and  Schmeichel~\cite{BBS}. In particular, Chv\'{a}tal conjectured that there exist $t_0$ such that if $G$ is a $t_0$-tough graph, then $G$ is hamiltonian, and this conjecture remains open. In this paper we address the conjecture of Broersma, van den
Heuvel, Jung, and Veldman~\cite{BHJV} (see also Conjecture~2 on page~10 of the survey~\cite{BBS}): 

\begin{conjecture}\label{conj-main}
    For each $t > 0$, there exists $c_t > 0$ such that every 2-connected $t$-tough $n$-vertex graph contains a cycle of length at least $c_t\log n$. 
    \end{conjecture}
The assumption of $2$-connectivity alone gives no lower bound on the
circumference that tends to infinity with the order, as the complete bipartite graph $K_{2,n-2}$ shows. The earlier work on this question is described by Bauer, Broersma, and
Schmeichel~\cite[pp.~10--11]{BBS}. In 1993, Broersma, van den Heuvel, Jung, and Veldman~\cite{BHJV} proved for every fixed $t>0$, every 2-connected $t$-tough $n$-vertex graph contains a cycle of length at least $\ell$ where $\ell \log \ell \ge (2-o(1))\log n$ as $n \rightarrow \infty$. In particular, this bound guarantees cycles of order at least $\log n/\log\log n$. The same authors obtained a lower bound proportional to $\log n$ under $3$-connectivity. B\"ohme, Broersma, and Veldman~\cite{BBV} settled the planar case of Conjecture \ref{conj-main} in
1996. In this paper, we prove Conjecture \ref{conj-main}:

\begin{theorem}\label{thm:main}
Let $t>0$, and $G$ be a $2$-connected
$t$-tough $n$-vertex graph with $\circum(G) = \ell$. Then
\begin{equation}\label{eq:main}
 n \le  \ell(1 + 1/t)^{\lfloor \ell/2\rfloor - 1}.               
\end{equation}
In particular, as $n \rightarrow \infty$,
\begin{equation}\label{eq:main2}
 \circum(G) \ge \frac{2\log n}{\log(1+1/t)} -  \frac{2\log\log n}{\log(1+1/t)} - O_t(1).             
\end{equation}
\end{theorem}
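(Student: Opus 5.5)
We plan to use a DFS (elimination) tree of $G$ and exploit the fact that long root-to-leaf paths are excluded by the circumference. Concretely, fix a depth-first search tree $T$ of $G$ rooted at $r$. Every edge of $G$ joins an ancestor–descendant pair in $T$. Let $h$ be the number of vertices on a longest root-to-leaf path of $T$. Our first observation is that $G$ has no cycle longer than $\ell$, so every path has at most ... vertices. This alone is too weak: the bound for a path is only about $2\ell$ by the standard Erd\H{o}s–Gallai type argument in $2$-connected graphs. The plan is therefore to count vertices level by level rather than just bound the height. Let $L_i$ be the set of vertices at depth $i$. The target is
\[
|L_{i+1}\cup\cdots| \text{ grows by a factor at most } (1+1/t) \text{ per two levels,}
\]
which after $\lfloor \ell/2\rfloor-1$ steps gives $n\le \ell(1+1/t)^{\lfloor\ell/2\rfloor-1}$.

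\textbf{Key steps.} First, for a vertex $v$, let $T_v$ be its subtree. The children subtrees of $v$ are pairwise nonadjacent in $G$, so deleting the ancestor path $P_v$ from $r$ to $v$ (inclusive) separates the children subtrees. Toughness applied to $S=P_v$ gives that the number of children of $v$ plus one (for the rest of the graph, if nonempty) is at most $|P_v|/t$. More usefully, apply toughness to the set $S$ consisting of all vertices of depth $\le d$ that are ancestors of something deeper. The components of $G-S$ include all subtrees rooted at depth $d+1$. This bounds the number of depth-$(d+1)$ subtrees by $|S|/t$, so the number of vertices down to depth $d+1$ is at most $(1+1/t)$ times the number down to depth $d$. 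This is the source of the factor $(1+1/t)$.

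Second, to get exponent $\ell/2$ rather than $\ell$, we need to show the relevant depth is at most about $\ell/2$. Here we use $2$-connectivity: for a vertex $v$ at depth $d$, each child subtree of $v$ has a back edge to a proper ancestor of $v$. Combining two such back-edge routes through different branches with the tree path yields a cycle of length roughly twice the depth of the branching. So we should choose a ``spine'' and measure depth from branching points. The induction will be on a cleverly chosen elimination tree in which every root-to-vertex path with $k$ branching vertices certifies a cycle of length about $2k$. We then only multiply by $(1+1/t)$ at branching levels, and the non-branching stretches contribute the linear factor $\ell$.

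\textbf{Main obstacle.} The hard part is the second step: building a tree whose branching depth is at most $\lfloor \ell/2\rfloor-1$ while keeping the toughness separation valid at every level. Plain DFS trees do not obviously give cycles of length twice the branching depth. I would first try choosing $T$ to be a DFS tree started from a longest cycle $C$, with its vertices treated as the root path. This contributes the factor $\ell$. The remaining components of $G-V(C)$ would then be recursively decomposed so that the attachment points yield cycles through $C$. Each extra branching level would force two additional vertices into a cycle, and the maximality of $C$ would then bound the number of levels. Making this exchange argument precise, without losing a constant in the exponent, is where I expect most of the work.
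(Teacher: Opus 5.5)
Your outer framework is the paper's. You put the vertex set of a longest cycle $C$ on top, so that $s_0=|V(C)|=\ell$. You take an elimination forest of $G-V(C)$ with levels $L_1,\dots,L_h$. You apply toughness to $S_j=V(C)\cup L_1\cup\dots\cup L_j$ to get $s_{j+1}\le(1+1/t)s_j$.

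The gap is the step that carries all the content: that this forest can be chosen with height $h\le\lfloor\ell/2\rfloor-1$. You leave this open, and the mechanism you sketch for it (``each extra branching level forces two additional vertices into a cycle'') is not the source of the factor $1/2$; you give no argument for it. The missing ingredient is a lemma that converts elimination-forest height into path length. If $F$ is $2$-connected and $a\ne b$, then $F$ has an $a$--$b$ path with at least $\td(F-\{a,b\})+1$ edges. This is implicit in the work of Bria\'nski et al.\ showing $\td\le\circum$ for $2$-connected graphs. The paper applies it to each component $H$ of $G-V(C)$:
\begin{itemize}
\item Choose $a\in V(C)$ with a neighbour in $H$. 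Let $F$ be $G[V(H)\cup\{a\}]$ plus a new vertex $b$ adjacent to $a$ and to every vertex of $H$ that has a neighbour in $V(C)\setminus\{a\}$.
\item Then $F$ is $2$-connected and $F-\{a,b\}=H$. The lemma yields an $a$--$z$ path through $H$, with $z\in V(C)\setminus\{a\}$ and at least $\td(H)+1$ edges.
\item Close this path with the longer $a$--$z$ arc of $C$, which has at least $\lceil\ell/2\rceil$ edges. Maximality of $C$ gives $\td(H)+1+\lceil\ell/2\rceil\le\ell$, that is, $\td(H)\le\lfloor\ell/2\rfloor-1$.
\end{itemize}
So the halving comes from discarding the shorter arc of $C$, and each level of the forest contributes one edge, not two.

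Your suggested substitutes do not fill this gap.
\begin{itemize}
\item A DFS tree of $H$ has height bounded only by a longest path in $H$. In $2$-connected graphs longest paths are not linearly bounded by the circumference: they can have length of order $\ell^2$, so your remark that they are about $2\ell$ is wrong. A DFS-based tree therefore cannot give the exponent $\lfloor\ell/2\rfloor-1$. You need a minimum-height elimination forest together with a path lemma of the Bria\'nski et al.\ type.
\item Multiplying by $(1+1/t)$ only at branching levels does not work either. The factor $\ell$ comes solely from $s_0$, and every level below $V(C)$, including single-vertex ones, must be paid for. It is the total height that has to be bounded.
\item At single-vertex levels toughness does not apply directly. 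The paper uses $s_j\ge\ell>t$, which follows from $\delta(G)\ge 2t$ for non-complete $G$ and $\ell\ge\delta(G)+1$. Your key step omits this point as well.
\end{itemize}
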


Here the term $O_t(1)$ is bounded independently of $n$, with a bound allowed to depend on $t$. For $t = 1$, a bound valid for all $n$ can be obtained from the proof:

\begin{corollary}\label{cor:one-tough}
Every $1$-tough graph $G$ on $n\ge3$ vertices satisfies
\[
    \circum(G)\ge
    2\log_2 n-2\log_2\log_2 n.
\]
\end{corollary}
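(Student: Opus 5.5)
We apply \cref{thm:main} with $t=1$ and then solve the resulting inequality for $\ell$. The only non-trivial step is a case split that controls the $\log_2\ell$ term.

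\textbf{Step 1: $G$ is $2$-connected.} Let $G$ be $1$-tough with $n\ge 3$.
\begin{itemize}
\item $G$ is connected. Otherwise $S=\emptyset$ gives $\omega(G-S)\ge 2$, and \eqref{eq:toughness} would force $0\ge 1\cdot\omega(G-S)\ge 2$, which is impossible.
\item $G$ has no cut vertex. If $v$ were one, then $S=\{v\}$ gives $\omega(G-S)\ge 2$, so $1=|S|\ge \omega(G-S)\ge 2$, again impossible.
\end{itemize}
Together with $n\ge 3$, this shows $G$ is $2$-connected. Complete graphs are included here, since they have infinite toughness and are $2$-connected for $n\ge3$.

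\textbf{Step 2: the inequality from \cref{thm:main}.} Put $\ell=\circum(G)$ and take $t=1$ in \eqref{eq:main}. This gives
\[
n\le \ell\, 2^{\lfloor \ell/2\rfloor-1}\le \ell\,2^{\ell/2-1}.
\]
Taking $\log_2$ and rearranging yields
\[
\ell\ \ge\ 2\log_2 n-2\log_2\ell+2 .
\]

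\textbf{Step 3: removing the $\log_2\ell$ term.} We split into two cases.
\begin{itemize}
\item If $\ell>2\log_2 n$: since $n\ge 3$ we have $\log_2 n>1$, so $\log_2\log_2 n>0$. Hence $\ell>2\log_2 n>2\log_2 n-2\log_2\log_2 n$, and we are done.
\item If $\ell\le 2\log_2 n$: then $\log_2\ell\le 1+\log_2\log_2 n$. Substituting into Step 2 gives
\[
\ell\ge 2\log_2 n-2-2\log_2\log_2 n+2=2\log_2 n-2\log_2\log_2 n,
\]
as required.
\end{itemize}

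The only delicate point is Step 3, namely bounding $\log_2\ell$ by $\log_2\log_2 n+1$. The case split handles this exactly, with no loss in constants. The remaining ingredient is the routine verification in Step 1 that $1$-toughness with $n\ge3$ implies the hypotheses of \cref{thm:main}.
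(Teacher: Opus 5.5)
Your proof is correct and follows the paper's argument: apply \eqref{eq:main} with $t=1$, then split on whether $\ell$ exceeds $2\log_2 n$ (the paper's case split $x\ge L$ versus $x<L$, with $x=\ell/2$ and $L=\log_2 n$), bounding $\log_2\ell\le 1+\log_2\log_2 n$ in the second case. Your Step~1 spells out why a $1$-tough graph on at least three vertices is $2$-connected, which the paper asserts without proof.
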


The logarithmic dependence on $n$ is best possible for every fixed
$0<t\le1$, as the examples in~\cite{BHJV} already show. A later construction of Kabela~\cite[Proposition~18]{Kabela} gives a better bound: if $G$ is the square of a complete ternary tree (the root has four children and all other non-leaf vertices have three children), then $n = |V(G)| = 2 \cdot 3^h - 1$ and it is straightforward to check that $G$ is 1-tough and $\circum(G) = 6h - 1 \leq (6/\log 3) \cdot \log n$. More generally for $0 < t \leq 1/2$, there exists a $t$-tough graph $G$ such that 
\[ \circum(G) \leq \Big\lceil\frac{2\log(n - 1)}{\log \lfloor 1/t\rfloor}\Big\rceil\]
which is simply obtained from a complete $\lfloor 1/t \rfloor$-ary tree by adding one vertex adjacent to every vertex in the tree. The height of the tree is 
$h = \lceil \log_{\lfloor 1/t \rfloor}(n - 1)\rfloor$ and the circumference of the graph is then $2h + 2$. This determines the order of growth of the circumference for $0<t\le1$, and determines the optimal coefficient which is asymptotic to $2/\log\lfloor 1/t\rfloor$ as $t \rightarrow 0$. The optimal
leading constant and sharpness for $t=1$ are not determined here. 

\begin{problem}
Determine 
\[ \liminf \left\{\frac{\circum(G)}{\log |V(G)|} : \tau(G) = 1\right\}.\]    
\end{problem}

The limit inferior is between $6/\log 3$ and $2/\log 2$.

\section{Treedepth and the lower bound}\label{sec:lower}

An \emph{elimination forest} of $G$ is a rooted forest on $V(G)$ such
that, for every edge $xy\in E(G)$, one of $x,y$ is an ancestor of the
other. The forest need not be a subgraph of $G$. Its height is the
maximum number of vertices on a path from a root to a leaf, and
$\td(G)$ is the minimum height of an elimination forest of $G$. This is called the \emph{treedepth} of $G$. For a connected graph, every elimination forest is a single tree,
because there are no graph edges between its distinct trees.
The argument is to show that $t$-tough graphs have large treedepth. 

\begin{lemma}\label{lem:growth}
Let $t>0$, and let $G$ be a non-complete connected $t$-tough
graph of order $n$. If $T$ is an elimination tree of $G$ of height
$h$, then $h>\lceil 2t\rceil$ and
\begin{equation}\label{eq:order-height}
 n\le \lceil 2t\rceil(1+1/t)^{h-\lceil 2t\rceil}.
\end{equation}
\end{lemma}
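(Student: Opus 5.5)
The plan is to bound level by level how fast the tree can grow, using toughness on separators formed by the top levels of $T$. For $d\ge 0$, let $S_d$ be the set of vertices of $T$ at depth at most $d$, with the root at depth $1$ and $S_0=\emptyset$. Let $m_d$ be the number of vertices at depth exactly $d$, so $|S_{d+1}|=|S_d|+m_{d+1}$ and $n=|S_h|$.

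\emph{Key separation fact.} Every edge of $G$ joins an ancestor--descendant pair in $T$. Hence every component of $G-S_d$ lies inside the subtree $T_v$ rooted at a single vertex $v$ of depth $d+1$. Each such subtree is nonempty, so $\omega(G-S_d)\ge m_{d+1}$. Consequently, whenever $m_{d+1}\ge2$, toughness gives
\[
  |S_d|\ge t\,m_{d+1},
  \qquad\text{and therefore}\qquad
  |S_{d+1}|\le (1+1/t)\,|S_d| .
\]

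\emph{Initial path and the degenerate case.} Let $r\ge1$ be maximal such that $m_1=\dots=m_r=1$. Then $S_r$ is a path of $r$ vertices in $T$.

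If $r=h$, then $T$ is a path and $n=h$. In this case I use non-completeness directly. Pick nonadjacent $x,y$; deleting $V(G)\setminus\{x,y\}$ leaves two components, so $n-2\ge 2t$. Writing $c=\lceil 2t\rceil$, this gives $h=n>c$. The required inequality $n\le c(1+1/t)^{n-c}$ then follows from Bernoulli:
\[
  (1+1/t)^{n-c}\ge 1+\frac{n-c}{t}\ge 1+\frac{n-c}{c}=\frac{n}{c},
\]
where the middle step uses $c\ge t$.

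Otherwise $r<h$ and $m_{r+1}\ge2$. The separation fact at $d=r$ gives $r=|S_r|\ge 2t$, hence $r\ge c$ since $r$ is an integer. Also $h\ge r+1>c$, which proves the first claim.

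\emph{Induction from depth $r$.} I claim $|S_{d+1}|\le(1+1/t)|S_d|$ for every $d\ge r$. If $m_{d+1}\ge2$, this is the key separation fact. If $m_{d+1}\le1$, then $|S_{d+1}|\le|S_d|+1$. Since $|S_d|\ge r\ge c\ge t$, we have $1\le |S_d|/t$, so the same bound holds. Iterating from $|S_r|=r$ yields
\[
  n\le r(1+1/t)^{h-r}.
\]

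\emph{Optimising over $r$.} The function $g(r)=r(1+1/t)^{h-r}$ is nonincreasing for $r\ge t$, because
\[
  \frac{g(r+1)}{g(r)}=\frac{r+1}{r}\cdot\frac{1}{1+1/t}\le 1
  \quad\text{exactly when } r\ge t .
\]
Since $r\ge c\ge t$, we get $n\le g(c)=c(1+1/t)^{h-c}$, which is \eqref{eq:order-height}.

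The only delicate points are the steps where the growth factor must hold even without branching (that is, when $m_{d+1}\le1$), and the path case. Both rely on $|S_d|\ge c\ge t$, which is why the base depth must be taken as $r\ge\lceil 2t\rceil$ rather than the root.
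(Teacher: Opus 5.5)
Your proof is correct and follows essentially the paper's argument: toughness applied to the top $d$ levels of $T$ forces the first $\lceil 2t\rceil$ levels to be singletons, and every later level grows by a factor at most $1+1/t$, with singleton levels covered because $|S_d|\ge t$. The differences are cosmetic. You iterate from the maximal singleton prefix $r$ and then use that $r(1+1/t)^{h-r}$ is nonincreasing for $r\ge t$, and you treat the path case separately via Bernoulli, whereas the paper iterates directly from depth $\lceil 2t\rceil$; your explicit bound $n-2\ge 2t$ also justifies the paper's unexplained claim that non-completeness gives $n>\lceil 2t\rceil$.
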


\begin{proof}
Give the root of $T$ depth one. For $j\le h$, let $L_j$ be the set of vertices at depth $j$, and put
\[
 P_j=L_1\cup\cdots\cup L_j.
\]
For $j<h$, the descendant subtrees rooted at the vertices of
$L_{j+1}$ partition $V(G)\setminus P_j$. Vertices in different such
subtrees are incomparable in the ancestor order, so no graph edge
joins two distinct subtrees. Each subtree is nonempty, so
\begin{equation}\label{eq:components}
 \omega(G-P_j)\ge |L_{j+1}|.
\end{equation}
Whenever $|L_{j+1}|\ge2$, we may apply toughness to $P_j$ to obtain
\begin{equation}\label{eq:level-bound}
 t|L_{j+1}|\le |P_j|.
\end{equation}

We show that the first $q = \lceil 2t\rceil$ levels are singletons. The first
level consists of the root. Suppose $1\le j<q$ and the first $j$
levels are singletons, so $|P_j|=j$. Since $G$ is not complete, $n >q$, so there is a further
level. If it contained at least two vertices, then
\eqref{eq:level-bound} would imply
\[
 j=|P_j|\ge2t,
\]
contrary to $j\le q-1<2t$. Induction gives $p_q=q$. For $q=1$
this just says that the first level is the root. Since $n>q$, the
tree continues beyond level $q$, and $h>q$.

For $q\le j<h$, we have $p_j\ge q\ge2t\ge t$. Thus
$|L_{j+1}|\le p_j/t$ also holds when $|L_{j+1}|=1$. Combining the
two cases gives
\begin{equation}\label{eq:recurrence}
 p_{j+1}=p_j+|L_{j+1}|\le(1+1/t)p_j
 \qquad(q\le j<h).
\end{equation}
Iterating from $p_q=q$ to $p_h=n$ proves~\eqref{eq:order-height}.
\end{proof}

The following consequence of~\cite[Theorem~2]{BJM} connects treedepth to circumference: 

\begin{theorem}[\BJM]\label{thm:td-circ}
Every $2$-connected graph $G$ satisfies $\td(G)\le\circum(G)$.
\end{theorem}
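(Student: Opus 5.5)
The plan is to derive the statement from the main result of \BJM~\cite{BJM}. Alongside that, I would carry out a direct inductive argument so that the dependence is transparent.

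\emph{Strengthened statement.} I would aim to prove the following rooted version: for every $2$-connected graph $G$ and every vertex $x\in V(G)$, there is an elimination tree of $G$ rooted at $x$ whose height is at most $\circum(G)$. The base case is a cycle $C_\ell$. Rooting at $x$ leaves a path on $\ell-1$ vertices, whose treedepth is logarithmic, so the height is well below $\ell$.

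\emph{Inductive step.} After making $x$ the root, $G-x$ is connected, since $G$ is $2$-connected. I would consider the block--cut tree of $G-x$. Each leaf block contains a vertex (other than its cut vertex) adjacent to $x$, again by $2$-connectivity. I would then recurse along the block--cut tree.
\begin{itemize}
\item Pick a ``central'' block or cut vertex $z$, in the style of a centroid, and eliminate it next.
\item For each remaining block $B$, form the $2$-connected graph $B^+$ obtained from $B$ by adding a vertex that represents $x$ together with the rest of the graph hanging off $B$, contracted.
\item Apply induction to each $B^+$, rooted at that representative vertex.
\end{itemize}
The height accounting must charge the depth used along a chain of blocks $B_1,\dots,B_k$ to a single cycle. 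Such a cycle would pass through $x$, traverse a long path through $B_1,\dots,B_k$ using two internally disjoint routes, and return to $x$. Its length should be at least the sum of the lengths of the ``through-paths'' in each block plus two. In this way, the heights of the nested subtrees telescope into the length of one cycle of $G$.

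\emph{Main obstacle.} The hard step is exactly this charging. Naively, rooting a subtree at a cut vertex and recursing costs $1$ per block along a chain, plus the recursive heights. Meanwhile, the long cycle through a chain of blocks only gains the length of a path crossing each block, and that path may be much shorter than that block's own circumference. What is needed is a statement of the form ``$\td$ of $B$ rooted at its two attachment points is at most the length of a longest path between them, plus a constant''. I would first try to formulate and prove this two-terminal variant by induction: for a $2$-connected $B$ with terminals $a,b$, the treedepth of $B$ rooted at $\{a,b\}$ is at most the maximum number of vertices on an $a$--$b$ path. I would then close the cycle through $x$ to obtain the bound $\circum(G)$.

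If that lemma resists a direct proof, I would fall back on citing \cite[Theorem~2]{BJM}. That theorem gives precisely this kind of relation between treedepth and long paths or cycles in $2$-connected graphs, and the stated inequality $\td(G)\le\circum(G)$ follows from it by specialisation.
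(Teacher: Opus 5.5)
Your proposal ultimately rests on citing Theorem~2 of Bria\'nski et al., and that citation is the paper's entire justification (it gives no independent argument), so the approach is essentially the same; your inductive sketch is not a proof on its own, because its decisive two-terminal step is left open. That step is exactly Lemma~\ref{lem:path} (an $a$--$b$ path with at least $\td(F-\{a,b\})+1$ edges, which the paper also takes from Bria\'nski et al.), and once you have it the block--cut-tree recursion is unnecessary: for any edge $ab$ of $G$, that path has at least two edges, so it avoids $ab$, and together with $ab$ it forms a cycle of length at least $\td(G-\{a,b\})+2\ge\td(G)$.
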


Combined with Theorem \ref{thm:td-circ}, the above lemma gives a short proof of Conjecture \ref{conj-main} which is weaker than Theorem \ref{thm:main} (\ref{eq:main2}) by an asymptotic factor of two:

\begin{theorem}\label{thm:main2}
Let $t>0$, and $G$ be a non-complete $2$-connected
$t$-tough $n$-vertex graph. Then
\begin{equation}
 \circum(G) \ge \lceil 2t \rceil + \frac{\log n/\lceil 2t\rceil}{\log(1+1/t)}.             
\end{equation}
\end{theorem}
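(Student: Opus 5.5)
The plan is to combine \cref{lem:growth} with \cref{thm:td-circ}. Since $G$ is $2$-connected, it is connected, so every elimination forest of $G$ is a single tree. We fix an elimination tree $T$ of $G$ of minimum height $h=\td(G)$.

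Because $G$ is non-complete, connected and $t$-tough, \cref{lem:growth} applies to $T$. It gives $h>\lceil 2t\rceil$ and
\[
 n\le \lceil 2t\rceil(1+1/t)^{h-\lceil 2t\rceil}.
\]
Taking natural logarithms and rearranging (note $\log(1+1/t)>0$ since $t>0$) yields
\[
 \td(G)=h\ge \lceil 2t\rceil+\frac{\log\bigl(n/\lceil 2t\rceil\bigr)}{\log(1+1/t)}.
\]
Finally, \cref{thm:td-circ} gives $\circum(G)\ge\td(G)$ for the $2$-connected graph $G$, and this is the claimed inequality. We read the numerator $\log n/\lceil 2t\rceil$ in the statement as $\log(n/\lceil 2t\rceil)$, which is what the computation produces.

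There is no real obstacle here; the substantive work lives in \cref{lem:growth} and the cited theorem. The only points to check are the hypotheses of \cref{lem:growth}: connectedness, so that the elimination forest is a tree, and non-completeness. Both are given directly by the assumptions of the statement.
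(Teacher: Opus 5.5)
Your proposal is correct and follows the paper's proof: take an elimination tree of height $\td(G)$, take logarithms in \cref{lem:growth}, and conclude with \cref{thm:td-circ}. Your reading of the numerator as $\log(n/\lceil 2t\rceil)$ is also the quantity the paper's own derivation produces.
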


\begin{proof}
Choose an elimination tree of height
$h=\td(G)$. Taking logarithms in Lemma~\ref{lem:growth} yields
\[
 h\ge q+\frac{\log(n/q)}{\log(1+1/t)}.
\]
Since $h$ is an integer and $h\le\circum(G)$ by
Theorem~\ref{thm:td-circ}, we are done.
\end{proof}

\bigskip

\section{Proof of Theorem \ref{thm:main}}

The lower bound on circumference in Theorem \ref{thm:main2} $\log n/\log(1+1/t) - O_t(1)$, whereas the bound in Theorem \ref{thm:main} (\ref{eq:main2}) is asymptotically twice as large as $n \rightarrow \infty$. To obtain this improvement, we first remove a longest cycle $C$ from the graph $G$, say of length $\ell$, and argue the treedepth of the remaining graph is at most $\ell/2$. Then an argument similar to the proof of Lemma \ref{lem:growth} applied to an elimination forest of height $h$ in $G - V(C)$ shows $n \leq \ell (1 + 1/t)^{h}$ whence we obtain Theorem \ref{thm:main}.

\bigskip

The following lemma is implicit in the proof
of Lemma~4 of Bria\'nski et al.~\cite{BJM}.

\begin{lemma}\label{lem:path}
Let $F$ be a $2$-connected graph, and let $a,b\in V(F)$ be distinct.
Then $F$ contains an $a$--$b$ path $P$ such that
\[
    |E(P)|\ge \td(F-\{a,b\})+1.
\]
\end{lemma}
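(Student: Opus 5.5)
The plan is to argue by induction on $|V(F)|$, peeling off the vertex $a$ and passing to a smaller $2$-connected graph whose two terminals are a neighbour of $a$ and $b$. Write $H=F-\{a,b\}$.

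\emph{Base case and reduction to connected $H$.} If $H$ is empty, any $a$--$b$ path has at least one edge and $\td(H)=0$, so we are done. Since treedepth is the maximum over components, choose a component $K$ of $H$ with $\td(K)=\td(H)$. Because $F$ is $2$-connected, $K$ has a neighbour at $a$ and a neighbour at $b$; otherwise $a$ or $b$ would be a cut vertex separating $K$ from the rest.

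Set $F'=F[K\cup\{a,b\}]+ab$. This graph is $2$-connected:
\begin{itemize}
\item deleting $a$ or $b$ leaves $K$ attached to the other terminal;
\item deleting $x\in K$ leaves each component of $K-x$ attached to $a$ or $b$ (in $F$ it had to leave through something other than $x$, and its only neighbours are in $K\cup\{a,b\}$), and the edge $ab$ then connects everything.
\end{itemize}
If $\td(K)\ge1$, the path we want has at least two edges and so does not use $ab$, so it lives in $F$. If $F'$ is smaller than $F$ we finish by induction. Hence we may assume $H$ is connected and $\td(H)\ge1$.

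\emph{Main step.} Consider $F-a$, which is connected. First suppose $F-a$ is $2$-connected. Pick a neighbour $v\ne b$ of $a$, which exists since $H$ is connected and attaches to $a$. Induction in $F-a$ with terminals $v,b$ gives a $v$--$b$ path with at least
\[
\td(H-v)+1\ge\td(H)
\]
edges, using $\td(H)\le\td(H-v)+1$ (put $v$ at the root). Prepending the edge $av$ gives at least $\td(H)+1$ edges, since $a\notin F-a$ keeps the path simple.

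In general, $F-a$ has a block--cutvertex tree. Every leaf block of $F-a$ contains a neighbour of $a$ in its interior, since $F$ is $2$-connected. Choose a path of blocks $B_1,\dots,B_m$ from a block containing $b$ to a leaf block $B_m$, and pick a neighbour $v$ of $a$ inside $B_m$ that is not a cut vertex. Let $c_1,\dots,c_{m-1}$ be the consecutive cut vertices, and put $c_0=b$, $c_m=v$. In each $B_i$, either $B_i$ is an edge, or $B_i$ is $2$-connected and smaller than $F$. Induction gives a $c_{i-1}$--$c_i$ path with at least $\td(B_i-\{c_{i-1},c_i\})+1$ edges. Concatenating these paths and adding $av$ yields an $a$--$b$ path of length
\[
1+\sum_{i=1}^m\bigl(\td(B_i-\{c_{i-1},c_i\})+1\bigr).
\]

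\emph{Expected obstacle.} It remains to show that this quantity is at least $\td(H)+1$, i.e.\ to build an elimination forest of $H$ of height at most $\sum_i(\td(B_i-\{c_{i-1},c_i\})+1)$. The natural construction places the cut vertices $c_1,\dots,c_{m-1}$ and $v$ along a root path and then hangs an elimination forest of each piece below it. The difficulty lies in the parts of $F-a$ hanging off the chosen block path: branches of the block tree not on $B_1,\dots,B_m$. These are separated from the chain by a single cut vertex $c_i$, but their treedepth has to be absorbed into the budget of the block $B_i$ they attach to.

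My first attempt would be to choose the block path adaptively. Walk from $b$'s block and, at each cut vertex, continue into the branch of largest treedepth. Off-chain branches then have treedepth bounded by the on-chain remainder, and since they are attached only at $c_i$ (already eliminated above them), they can be placed as siblings under the same ancestor chain without increasing the height. This height comparison is where I expect the real work to be.
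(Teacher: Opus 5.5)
\textbf{There is a genuine gap: the key treedepth inequality is left unproved, and the mechanism you sketch for it would not work as stated.}

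Your reduction to connected $H$, the case where $F-a$ is $2$-connected, and the length count $1+\sum_{i}(\td(B_i-\{c_{i-1},c_i\})+1)$ along the block chain are all correct. This is essentially the block-chain construction the paper imports from the proof of Lemma~4 of Bria\'nski et al.\ (there run in $F-b$ starting at $a$; you mirror the roles of $a$ and $b$).

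However, the inequality $\td(H)\le\sum_{i=1}^m(\td(B_i-\{c_{i-1},c_i\})+1)$ is the entire content of the lemma, and you do not prove it. Your instinct that off-chain branches should be bounded by the remaining chain budget is right. But branches need not attach at the chain vertices $c_i$. A block $B_i$ may contain further cut vertices $w\ne c_{i-1},c_i$ of $F-a$. Such a $w$ sits inside the elimination forest of $B_i-\{c_{i-1},c_i\}$, not on your root path, so the branch hanging at $w$ \emph{does} raise the height of that piece, by up to its own treedepth. Your rule only says which block to enter at a given cut vertex, whereas the decisive choice is \emph{which vertex of $B_i$ to leave through}. Also, if all pieces are hung below the full path $c_1,\dots,c_{m-1},v$, the bound loses up to $m-1$; the piece belonging to $B_i$ must hang below $c_i$.

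\textbf{How to close the gap.}
\begin{itemize}
\item Put $c_0=b$. Since $H$ is connected, $b$ lies in a unique block $B_1$ of $F-a$.
\item Having entered $B_i$ at $c_{i-1}$, let $X_i$ be the set of cut vertices of $F-a$ in $B_i$ other than $c_{i-1}$. For $x\in X_i$, let $D_x$ consist of $x$ and everything $x$ separates from $B_i$.
\item If $X_i=\emptyset$, then $B_i$ is a leaf block (or all of $F-a$); stop with $m=i$ and pick $v$ there.
\item Otherwise choose $c_i\in X_i$ maximizing $\td(D_x-x)$. Let $E_{i+1}$ be a component of $D_{c_i}-c_i$ of maximum treedepth, and let $B_{i+1}$ be the block at $c_i$ that meets $E_{i+1}$.
\end{itemize}

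Set $E_1=H$ and $h_i=\td(B_i-\{c_{i-1},c_i\})$. Then $\td(E_i)\le 1+h_i+\td(E_{i+1})$. To see this, root a forest at $c_i$. Below it place the components of $D_{c_i}-c_i$, each of treedepth at most $\td(E_{i+1})$. Also place an optimal elimination forest of $B_i-\{c_{i-1},c_i\}$ in which an optimal forest of $D_x-x$ is hung below each $x\in X_i\setminus\{c_i\}$. This second part has height at most $h_i+\max_{x}\td(D_x-x)\le h_i+\td(E_{i+1})$ by the choice of $c_i$. The whole is a valid elimination forest because each $D_x-x$ meets the rest of the graph only through $x$.

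Finally, $\td(E_m)=\td(B_m-c_{m-1})\le h_m+1$ (put $v$ at the root). Backward induction then gives $\td(H)\le\sum_i(h_i+1)$, which is exactly what your length count requires.
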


\begin{proof}
In the proof of \cite[Lemma~4]{BJM}, the authors construct
paths through a sequence of blocks of $F-b$, starting at $a$,
whose total length is at least $\td(F-\{a,b\})$.
They concatenate these paths and then append an edge into $b$.
The resulting $a$--$b$ path therefore has length at least
$\td(F-\{a,b\})+1$, as claimed.
\end{proof}

\begin{lemma}\label{lem:delete-cycle}
Let $G$ be a $2$-connected graph, and let $C$ be a longest cycle
of length $\ell$. Then
\begin{equation}\label{eq:half-depth}
    \td(G-V(C))\le \left\lfloor\frac\ell2\right\rfloor-1.
\end{equation}
\end{lemma}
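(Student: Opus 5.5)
Treedepth of a graph is the maximum over its components, so it suffices to fix a component $D$ of $G-V(C)$ and show $\td(D)\le\lfloor\ell/2\rfloor-1$. If $D$ is empty there is nothing to prove. The plan is to build a $2$-connected auxiliary graph $F$ from $D$ and its attachments on $C$, and to use Lemma~\ref{lem:path} to produce a long path through $D$ that can be spliced into $C$. The maximality of $C$ then caps the treedepth.

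The construction goes as follows. Since $G$ is $2$-connected and $C$ is a cycle, $D$ has at least two distinct attachment vertices on $C$, that is, vertices of $C$ with a neighbour in $D$. Choose two such attachments $a,b$ that are consecutive among the attachments in the cyclic order of $C$. Then one of the two arcs of $C$ between $a$ and $b$, call it $A$, has no interior vertex adjacent to $D$; we may take $A$ to be the shorter arc, or any arc free of other attachments. A more careful choice may be needed here, discussed below.

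Let $F$ be the graph $D+\{a,b\}$, meaning $G[V(D)\cup\{a,b\}]$ with the edge $ab$ added. Equivalently, $F$ is $D$ together with $a$ and $b$ and the arc $A$ contracted to the single edge $ab$. We must check that $F$ is $2$-connected. If some vertex $v$ disconnects $F$, then in $G$ the set $\{v\}$, or $\{v\}$ together with pieces of $C$ routed around, would separate part of $D$ from the rest of $G$. This contradicts $2$-connectivity of $G$, using that the rest of $C$ connects $a$ and $b$ outside $D$. Lemma~\ref{lem:path} then gives an $a$--$b$ path $P$ in $F$ with $|E(P)|\ge \td(D)+1$. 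The path $P$ avoids the edge $ab$, because $ab$ alone has length $1$ and a path using that edge would just be the edge itself. So if $\td(D)\ge1$, the path $P$ runs through $D$.

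Now replace the arc $A$ of $C$ by $P$. The result is a cycle of length $\ell-|E(A)|+|E(P)|$. Maximality of $C$ forces $|E(P)|\le|E(A)|$, hence $\td(D)\le |E(A)|-1$.

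The main obstacle is making $|E(A)|\le\lfloor\ell/2\rfloor$ while keeping the interior of $A$ free of attachments, and consecutive attachments need not lie within half the cycle. The way round this is to use the other arc $B$ as well. If $a,b$ are consecutive attachments with attachment-free arc $A$, then $|E(A)|\ge|E(P)|$. If $|E(A)|>\ell/2$, I would instead pick any two attachments $a',b'$ splitting $C$ so that some arc has length at most $\ell/2$. Such a pair always exists when there are at least two attachments: take the arc containing no other attachments in the complementary direction and use the shorter of the two arcs. The splice needs $P$ to avoid that arc's interior, which holds automatically because the arc's interior lies outside $F$, even if it contains other attachments. So we need not insist that the arc is attachment-free. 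We simply take $F=G[V(D)\cup\{a,b\}]+ab$ for any two distinct attachments $a,b$, and replace whichever of the two $a$--$b$ arcs of $C$ is shorter, which has length at most $\lfloor\ell/2\rfloor$.

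The $2$-connectivity of $F$ for an arbitrary pair $a,b$ is the genuinely delicate point. Suppose a cut vertex of $F$ separates some part $X$ of $D$ from $a$ and $b$. By $2$-connectivity of $G$, $X$ must attach to $C$ at some other attachment $c$. To handle this, I would choose $a$ and $b$ to be attachments of a single block structure appropriately. Alternatively, I would apply Lemma~\ref{lem:path} to $F_0=G[V(D)\cup V(C)]$ with $C$ contracted suitably. As a fallback, I would contract each of the two arcs to single vertices and argue via the standard fan lemma. Once $F$ is $2$-connected, the splice gives $\td(D)+1\le\lfloor\ell/2\rfloor$, which is the bound \eqref{eq:half-depth}.
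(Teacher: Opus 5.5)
Your splice-and-compare arithmetic is fine. The gap is that the argument needs $F=G[V(D)\cup\{a,b\}]+ab$ to be $2$-connected for two attachments $a,b$ fixed in advance, and this can fail for every choice of the pair. Take $D$ to be a star $K_{1,3}$ with centre $v$ whose leaves $x_1,x_2,x_3$ each have exactly one neighbour on $C$, at three distinct vertices $c_1,c_2,c_3$. This fits the hypotheses, e.g.\ with $\ell=12$ and the $c_i$ equally spaced. Whichever two of the $c_i$ you take as $a,b$, the third leaf is a pendant vertex of $F$, so Lemma~\ref{lem:path} cannot be applied. Your first $2$-connectivity argument (``the rest of $C$ connects $a$ and $b$'') misses exactly this situation. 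The fallbacks you list do not fix it:
\begin{itemize}
\item choosing $a,b$ ``appropriately'' is impossible in this example;
\item contracting the two arcs into vertices $x_A,x_B$ gives a $2$-connected graph, but the $a$--$b$ path from Lemma~\ref{lem:path} may then pass through $x_A$ or $x_B$, so it no longer splices into $C$.
\end{itemize}

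The missing idea is to fix only one endpoint on $C$. Keep a single attachment $a$, and replace all of $V(C)\setminus\{a\}$ by one new vertex $b$. Make $b$ adjacent to $a$ and to every vertex of $D$ with a neighbour in $V(C)\setminus\{a\}$; this amounts to contracting the path $C-a$.

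Then $2$-connectivity is immediate. The only nontrivial case is deleting some $v\in V(D)$. Each component of $D-v$ has a neighbour on $C$ because $G-v$ is connected, so it is adjacent to $a$ or to $b$ in $F$, and $ab\in E(F)$.

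Lemma~\ref{lem:path} now gives an $a$--$b$ path of length at least $\td(D)+1\ge2$, so it avoids the edge $ab$. Replace its last edge $yb$ by $yz$, where $z\in V(C)\setminus\{a\}$ is a neighbour of $y$. Close the cycle along the longer $a$--$z$ arc of $C$, which has at least $\lceil\ell/2\rceil$ edges. Maximality of $C$ gives $\td(D)+1+\lceil\ell/2\rceil\le\ell$, which is the bound.

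So the second endpoint is chosen by the path rather than in advance, and this removes any need for a good pair. Your vague option of contracting $C$ ``suitably'' points this way, but only this specific contraction, with a floating endpoint, makes the argument work.
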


\begin{proof}
The assertion is immediate if $C$ is Hamiltonian. Otherwise, let $H$
be a component of $G-V(C)$. Since $G$ is $2$-connected, $H$ has
neighbors at at least two distinct vertices of $C$.
Choose one such vertex $a\in V(C)$. Construct an auxiliary graph $F$ by adding a vertex $b$ to the induced subgraph $G[V(H) \cup\{a\}]$ by adding all edges between $b$ and every vertex of $H$ having a neighbor in $V(C)\setminus\{a\}$, as well as the edge $ab$. It is straightforward to see that $F$ is $2$-connected. 

\bigskip

Since $F-\{a,b\}=H$, Lemma~\ref{lem:path} gives an $a$-$b$ path
$P$ of length at least $\td(H)+1$. This length is at least two,
so $P$ does not use the edge $ab$.
Replace its final edge into $b$ by an edge to a corresponding vertex
$z\in V(C)\setminus\{a\}$. We obtain an $a$-$z$ path in $G$
with all internal vertices in $H$ and length at least $\td(H)+1$.
Together with a longest $a$-$z$ path in $C$, this path forms a cycle
of length at least
\[
    \td(H)+1+\left\lceil\frac\ell2\right\rceil \le\ell,
\]
by the maximality of $C$, and hence 
$\td(H)\le\lfloor\ell/2\rfloor-1$.
Taking the maximum over the components $H$ of $G-V(C)$ proves
\eqref{eq:half-depth}.
\end{proof}

\bigskip

\begin{proof}[Proof of Theorem~\ref{thm:main}]
Write $\ell=\circum(G)$, and let $C$ be a cycle of length $\ell$.
We first prove \eqref{eq:main}.
If $\ell=n$, this inequality is immediate, since $\ell\ge3$.
Assume therefore that $\ell<n$, so $G$ is not complete.

We first note that $\ell>t$. Indeed, a vertex $v$ of minimum degree
has a nonneighbor, since $G$ is not complete. Deleting $N_G(v)$
leaves $v$ isolated and at least one other vertex, so toughness gives
\[
    \delta(G)=|N_G(v)|\ge2t.
\]
An endpoint of a longest path has all its neighbors on the path.
The segment joining that endpoint to its farthest neighbor,
together with their joining edge, gives a cycle of length at least
$\delta(G)+1$. Consequently,
\begin{equation}\label{eq:ell-large}
    \ell\ge\delta(G)+1>t.
\end{equation}

By Lemma~\ref{lem:delete-cycle}, the graph $G-V(C)$ has an
elimination forest of height
\[
    h\le\left\lfloor\frac\ell2\right\rfloor-1.
\]
For $1\le j\le h$, let $L_j$ be the set of vertices at depth $j$,
with roots at depth one. Put
\[
    S_0=V(C),\qquad
    S_j=V(C)\cup L_1\cup\cdots\cup L_j,
    \qquad s_j=|S_j|.
\]
Thus $s_0=\ell$ and $s_h=n$.

\bigskip

For $0\le j<h$, the descendant subtrees rooted at the vertices of
$L_{j+1}$ partition $V(G)\setminus S_j$.
Vertices in different such subtrees are incomparable in the ancestor
order. Hence no graph edge joins two distinct subtrees, and
\begin{equation}\label{eq:components}
    \omega(G-S_j)\ge |L_{j+1}|.
\end{equation}
If $|L_{j+1}|\ge2$, toughness and \eqref{eq:components} imply
\[
    t|L_{j+1}|\le s_j.
\]
If $|L_{j+1}|=1$, the same inequality follows from
$s_j\ge\ell>t$, by \eqref{eq:ell-large}.
Therefore, in either case,
\[
    s_{j+1}
    =s_j+|L_{j+1}|
    \le\left(1+\frac1t\right)s_j.
\]
Iterating gives
\[
    n=s_h
    \le\ell\left(1+\frac1t\right)^h
    \le\ell\left(1+\frac1t\right)^{\lfloor\ell/2\rfloor-1},
\]
which is \eqref{eq:main}.
Notice that toughness has only been applied to $G$, not to
$G-V(C)$.

\bigskip

To deduce \eqref{eq:main2}, put
$\lambda=\log(1+1/t)>0$.
Taking logarithms in \eqref{eq:main} gives
\[
    \log n
    \le\log\ell+
       \left(\left\lfloor\frac\ell2\right\rfloor-1\right)\lambda
    \le\log\ell+\left(\frac\ell2-1\right)\lambda.
\]
Consequently,
\begin{equation}\label{eq:rearranged}
    \ell\ge
    2+\frac{2}{\lambda}\bigl(\log n-\log\ell\bigr).
\end{equation}
If $\ell\ge(2/\lambda)\log n$, then
\eqref{eq:main2} follows immediately.
Otherwise,
\[
    \log\ell
    \le\log\log n+\log\frac2\lambda.
\]
Substituting into \eqref{eq:rearranged}, we obtain
\[
    \ell\ge
    \frac{2}{\lambda}\bigl(\log n-\log\log n\bigr)
    +2-\frac{2}{\lambda}\log\frac2\lambda.
\]
Since $t$ is fixed, the final two terms depend only on $t$.
This proves \eqref{eq:main2}.
\end{proof}

\bigskip
\begin{proof}[Proof of Corollary~\ref{cor:one-tough}]
A $1$-tough graph on at least three vertices is $2$-connected.
Set $\ell=\circum(G)$.
Applying \eqref{eq:main} with $t=1$ gives
\[
    n\le\ell\,2^{\lfloor\ell/2\rfloor-1}
      \le\frac\ell2\,2^{\ell/2}.
\]
Put $x=\ell/2$ and $L=\log_2 n$. Then
\[
    L\le x+\log_2 x.
\]
Since $L>1$, if $x\ge L$ then $x\ge L-\log_2 L$.
If $x<L$, the preceding inequality gives
\[
    x\ge L-\log_2 x\ge L-\log_2 L.
\]
Thus in both cases $\ell=2x\ge2L-2\log_2 L$, as required.
\end{proof}

\end{document}